\newtheorem{theorem}{Theorem}[section]
\newtheorem{lemma}[theorem]{Lemma}
\newtheorem{proposition}[theorem]{Proposition}
\newtheorem{corollary}[theorem]{Corollary}
\theoremstyle{definition}
\newtheorem{definition}[theorem]{Definition}
\newtheorem{observation}{Observation}
\newtheorem{example}[theorem]{Example}
\newcommand{\Endmarker}
{~\begin{flushright}
\raisebox{0pt}[0pt][0pt]{\raisebox{4.8ex}{$\blacktriangleleft$}}
\end{flushright}\vspace{ -0.35in}}
\newenvironment{thm}{\begin{theorem}}{
%\Endmarker
\end{theorem}}
\newcommand{\sk}{\vskip 0.1in}
\newcommand{\noi}{\noindent}
\newcommand{\foldletters}{ 
\psfrag{ldots}{$\ldots$}
\psfrag{n1}{$n_1$}
\psfrag{n2}{$n_2$}
\psfrag{n3}{$n_3$}
\psfrag{l1}{$l_1$}
\psfrag{l2}{$l_2$}
\psfrag{l3}{$l_3$}
\psfrag{ln}{$l_n$}
\psfrag{gt}{$\rightsquigarrow$}
\psfrag{e1}{$e_1$}
\psfrag{e2}{$e_2$}
\psfrag{v}{$[v]$}
\psfrag{w}{$[w]$}
\psfrag{u}{$[u]$}
\psfrag{v0}{$v_0$}
\psfrag{A}{$a$}
\psfrag{B}{$b$}
\psfrag{U}{$u$}
\psfrag{V}{$v$}
\psfrag{Head1}{}%{head($l_1$)}
\psfrag{Head2}{}%{head($l_2$)}
\psfrag{Tail1}{}%{tail($l_1$)}
\psfrag{Tail2}{}%{tail($l_2$)}
\psfrag{pc}{}
}
\newcommand{\insertfigure}[2]{
  \begin{center}
  \foldletters
    \includegraphics[width=#2\textwidth]{#1}
  \end{center}}
\newcommand{\bob}{\left.\begin{array}}
\newcommand{\killbob}{\end{array}\right.}
\newcommand{\tr}[1]{\ensuremath{\textrm{#1}}}
\newcommand{\ttt}{\texttt}
\newcommand{\Cmd}[2]{\ttt{#1}($#2$)}
\newcommand{\N}{\mathbb{N}}
\newcommand{\lra}{\rightarrow}
\newcommand{\bra}{\langle}
\newcommand{\kett}{\rangle}
\newcommand{\Edge}[3]{\xymatrix{#1\bullet \ar@{-}[r]^{#2}|{\blacktriangleright} & \bullet #3}}
\renewcommand{\tilde}{\widetilde}
\author{Nicholas W.M. Touikan}
\title{A fast algorithm for Stallings' Folding Process}
\begin{document}
\maketitle

\section{Introduction}
The main purpose of this is to give an algorithm that quickly performs Stallings' Folding algorithm for finitely generated subgroups of a free group. First some definitions, motivations and then results.

Let $\Gamma$ be a directed labeled graph with the labels lying in some alphabet $X=\{x_1, x_2,\ldots, x_n\}$. Such a graph is said to be \emph{folded} if at each vertex $v$ there is at most one edge with a given label and incidence starting (or terminating) at $v$. We now state the following topologically flavoured definition.

\begin{definition} An \emph{elementary folding} of a directed labeled graph $\Gamma$ is a (continuous) quotient map $\pi:\Gamma \lra \Delta$, where $\Delta$ is another directed labeled graph, that is obtained by identifying two edges $e_1$ and $ e_2$, which at some vertex $v$, have the same incidence and label at $v$ and if $e_1$ and $e_2$ are edges between vertices $v, w$ and $v, w'$ respectively then the vertices $w$ and $w'$ are also identified.
\end{definition}
A \emph{folding process} takes as input reduced words in $J_1,\ldots, J_m$ in $X^{\pm1}$, makes a graph with $m$ loops with labels $J_1,\ldots, J_m$ and attaches them all at some vertex $v_0$ to make a graph $\Gamma_0$ which is a bouquet of $m$ circles with labels $J_1,\ldots,J_m$ if read starting at $v_0$ and following the obvious convention with respect to incidence and inverses. The algorithm then consists of a sequence of elementary foldings until it is impossible to fold any further:\[
\Gamma_0 \lra \Gamma_1 \lra \ldots \lra \Gamma_M = \Gamma \]
The process terminates because $\Gamma_0$ has finitely many edges and each elementary folding decreases the number of edges by 1. The output will be the folded graph $\Gamma = \Gamma(J_1,\ldots, J_m)$ which is independent of the sequence of foldings (see \cite{Kapovich-Miasnikov}).
\begin{example}
This is a folding for inputs:$J_1=abba, J_2=a^{-1}ba, J_3=aaa$. The thickened edges represent the elementary foldings. The progression is to be read left to right, top to bottom.
\insertfigure{folding}{1.00}
When we get to a point where we can no longer fold and so we stop. From this,  we can now infer that $H=\bra J_1,J_2,J_3 \kett = F(a,b)$
\end{example}

This folded graph gives us a picture of the subgroup $H=\bra J_1,\ldots J_m \kett \leq F(X)$. Topologically, if we view $F(X)$ as the fundamental group $\pi_1 (B,x_0)$ of a bouquet of $n$ circles $B$, then constructing $\Gamma$ amounts to constructing the ``core'' of the covering space $\tilde{B}$ of $B$ corresponding to the subgroup $H$. \cite{Stallings,Dunwoody}

What is also of great interest are the ``computational'' properties of $\Gamma$. One can immediately verify that $w \in H$ by checking that $w$ is the label of a loop based at $v_0$. It follows that once $\Gamma$ is constructed the \emph{membership problem} for the word $w$ and the subgroup $H$ is solvable in linear time. If we take a spanning tree of $\Gamma$ using the breadth first method, which takes time linear in the number of vertices of the graph,  we can obtain a \emph{Nielsen Basis} for $H$. We can also compute the index of $H$ in $F(X)$: if $\Gamma$ is \emph{regular}, i.e. at each vertex $v$ for each $x \in X$ there are edges with label $x$ with both incidences, then the index is the number of vertices in $\Gamma$ otherwise, $[F(X):H]=\infty$. There is also a bijective correspondence between spanning trees of $\Gamma$ and \emph{Schreier systems of coset representatives} (given a spanning tree $T$, take labels of subtrees of $T$ rooted at $v_0$ that do not have any vertices of valency more than two). These systems of coset representatives are very important in the theory of rewriting systems. We now state the main result \cite{Sims,Stallings_T-C,Bondy}:

\begin{definition}
The function $\log^{*}:\N \lra \N$ assigns to each natural number $n$ the least natural number $k$ such that:\[ \underbrace{\log\circ\log\circ\ldots\circ\log}_{k \tr{~times}}(n) \leq 1 \] where we are using the base 2 logarithm. Equivalently $log^*(2^n)=log^*(n)+1$.\end{definition}

Notice that:\[
\log^*(2^{2^{2^{2^2}}}) = \log^*(2\cdot10^{19728}) =5 \] It follows that for most practical purposes, $\log^*$ grows so slowly that it can be considered a constant.

\begin{theorem}\label{thm:main}
Let $F(X)$ be the free group over the generators $x_1,\ldots, x_n$, let $J_1,\ldots, J_m$ be words in $X^{\pm 1}$ and let $N=\sum |J_i|$. Then there is an algorithm for the folding process that given the input $J_1,\ldots, J_m$ will terminate in time at most $O(N\cdot log^* (N))$.
\end{theorem}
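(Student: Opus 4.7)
The plan is to realize the folding process as a sequence of at most $2N$ vertex identifications, performed on a disjoint-set (union-find) data structure so that each identification costs $O(\log^*(N))$ amortized. This gives a total running time of $O(N\log^*(N))$.

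First, I would build $\Gamma_0$ in time $O(N)$ while initializing a union-find structure in which every vertex is its own class. At the representative of each class I maintain a dictionary $D$ indexed by $X^{\pm 1}\times\{\text{in},\text{out}\}$; each entry stores at most one edge incident at that class with the given label and incidence. Because $X$ is fixed, $|D|\le 4|X|=O(1)$, so every operation on $D$ runs in $O(1)$ time. While constructing $\Gamma_0$, whenever a new edge at $v_0$ collides with an existing entry of $D_{v_0}$ I push the two colliding edges onto a work queue $Q$ of pending foldings.

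The main loop repeatedly pops a pair $(e_1,e_2)$ from $Q$, uses \texttt{find} to locate the current representatives of the two endpoints that the folding would identify, calls \texttt{union} to merge their classes, and then folds the dictionary of the old smaller-rank representative into that of the new one; any (label, incidence) collision uncovered during this merge is pushed onto $Q$ as a freshly-scheduled folding. Edges are treated lazily, so their stored endpoints do not need eager updating: any subsequent \texttt{find} returns the current representative. Since each elementary folding strictly decreases the edge count and $\Gamma_0$ has at most $2N$ edges, the loop runs for at most $2N$ iterations, each performing $O(1)$ union-find calls and $O(1)$ dictionary operations. The classical bound for union-find with union-by-rank and path compression then gives a total of $O(N\log^*(N))$ operations, as required.

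The main difficulty is not the complexity estimate but the accounting that justifies the $O(1)$-size-dictionary assumption. I must show that whenever a representative's dictionary is inspected it holds only $O(1)$ entries; otherwise a single merge could in principle process many edges at once and break the per-iteration constant bound. The key invariant is that between pops of $Q$ each representative's dictionary contains at most one edge per (label, incidence) pair, so any surplus edges produced transiently during a merge are immediately enqueued and no representative accumulates more than $4|X|$ live entries at any inspection. Combined with the order-independence of the folded graph noted in the discussion preceding the theorem, this shows that the algorithm terminates with $\Gamma=\Gamma(J_1,\ldots,J_m)$ within the claimed bound.
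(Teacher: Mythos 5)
Your proposal is correct and rests on exactly the same ideas as the paper's argument: a union--find structure with union by rank and path compression for the vertex classes, lazy updating of edge endpoints via \texttt{find}, the observation that the fixed alphabet bounds the per-folding local work by a constant, and Tarjan's amortized $O((n+N)\log^*(N))$ bound to conclude. The only difference is bookkeeping --- you maintain a work queue of pending edge pairs together with a per-class $(\text{label},\text{incidence})$ dictionary, whereas the paper keeps a list UNFOLDED of unfolded vertices and rescans at most five entries of a vertex's (lazily concatenated) edge list --- and this does not change the substance of the proof.
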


\begin{corollary}
Given generators $J_1,\ldots, J_m$ as before and the subgroup $H=\bra J_1,\ldots, J_m \kett \leq F(X)$ we can:\begin{enumerate}
\item Compute the index of $H$.
\item Obtain a Nielsen Basis for $H$.
\item Get a Schreier Transversal
\end{enumerate}
In time $O(N \cdot log^{*}(N))$. And once $\Gamma$ is constructed we can solve the membership problem for a word $w$ in time O($m$) where $m$ is the length of $w$.
\end{corollary}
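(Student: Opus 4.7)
The plan is to invoke Theorem \ref{thm:main} to construct $\Gamma$ in time $O(N \cdot \log^{*}(N))$, and then to observe that each of the four tasks reduces to a single linear-time traversal of $\Gamma$, so their cost is absorbed by the construction time. The starting bouquet $\Gamma_0$ has $N$ edges and $N+1$ vertices, and since each elementary folding strictly decreases the number of edges (and non-strictly decreases the number of vertices), the final graph $\Gamma$ also has $|E(\Gamma)|, |V(\Gamma)| = O(N)$. Hence any algorithm whose running time is linear in the size of $\Gamma$ runs in time $O(N)$.

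For the index, I would walk through every vertex $v \in V(\Gamma)$ and every generator $x \in X$, checking whether there is both an incoming and an outgoing $x$-labeled edge at $v$. If any check fails then $[F(X):H] = \infty$; otherwise $\Gamma$ is regular and the index is $|V(\Gamma)|$. This sweep costs $O(|V(\Gamma)|\cdot|X|) = O(N)$.

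For the Nielsen basis and Schreier transversal I would compute a spanning tree $T$ of $\Gamma$ rooted at $v_0$ by breadth-first search, which takes time $O(|V(\Gamma)| + |E(\Gamma)|) = O(N)$. As noted in the introduction, the labels of the paths in $T$ from $v_0$ to each vertex form a Schreier transversal, and each non-tree edge $e$ between vertices $v,w$ yields a Nielsen generator of the form $u_v \cdot \lab(e) \cdot u_w^{-1}$, where $u_v$ denotes the $T$-path label from $v_0$ to $v$. Outputting these words takes time proportional to their total length, which is again $O(N)$.

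Finally, given a word $w$ of length $m$, checking membership amounts to tracing $w$ in $\Gamma$ starting at $v_0$ and verifying that the trace closes up at $v_0$; since $\Gamma$ is folded, each letter of $w$ determines at most one edge to traverse, so each step is constant time and the total cost is $O(m)$. The only nonobvious point is the $O(N)$ size bound on $\Gamma$, which is the obstacle I anticipate having to state carefully; once that is in place every remaining step is a textbook graph traversal.
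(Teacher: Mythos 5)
Your proposal follows the same route the paper takes: the paper gives no formal proof of this corollary, but its justification is exactly the discussion in the introduction (membership by tracing a loop at $v_0$, breadth-first spanning tree for the Nielsen basis and Schreier transversal, regularity check for the index) combined with Theorem \ref{thm:main}, and your explicit $O(N)$ bound on $|V(\Gamma)|$ and $|E(\Gamma)|$ is a welcome addition that the paper leaves implicit. One claim in your write-up is wrong as stated, though: the \emph{total length} of the Nielsen generators $u_v\cdot\lab(e)\cdot u_w^{-1}$ need not be $O(N)$. The rank of $H$ can be of order $N$ and each generator can have length of order $N$ (e.g.\ for a finite-index subgroup whose core graph has $\Theta(N)$ vertices), so writing the basis out explicitly can cost $\Theta(N^2)$. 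The statement survives only under the paper's implicit convention that ``obtaining'' the basis means producing the spanning tree together with the non-tree edges, from which each generator can be read off on demand; you should either adopt that convention or drop the $O(N)$ output-length claim.
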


We can also slightly generalize the algorithm to obtain the following very useful fact:
\begin{theorem}\label{thm:generalization}
Let $\Delta$ be any connected directed labeled graph. Suppose it has $V$ vertices and $E$ edges, then there is an algorithm that will fold $\Delta$ in time at most $O(E + (V+E) log^* (V))$.
\end{theorem}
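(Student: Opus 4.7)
The plan is to observe that the folding loop in the proof of Theorem \ref{thm:main} is essentially local: it operates on a queue of foldable pairs together with per-vertex edge data, and never uses the bouquet structure of $\Gamma_0$ beyond the initial setup. Consequently, the same loop will fold any connected labeled graph $\Delta$, provided its data structures are initialized correctly; only the setup cost and the parameters $V$ and $E$ change.

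First I would build, in $O(E)$ time, the data structures the folding loop consumes: a union-find structure on the vertex set of $\Delta$, and at each vertex $v$ a bucket, indexed by (label, incidence), containing the list of edges at $v$ with that label and incidence. A single linear pass over the edge set populates these buckets since the alphabet $X$ has fixed size. During the same pass every initially foldable pair is enqueued, one for each bucket containing two or more entries. This initialization phase is what produces the leading $O(E)$ term in the bound. I would then run the folding loop from Theorem \ref{thm:main} verbatim: pop a pair $(e_1,e_2)$ sharing a vertex, label, and incidence; use the union-find structure to identify the far endpoints of $e_1$ and $e_2$; merge the smaller adjacency bucket-table into the larger; and whenever the merge produces a bucket with two or more edges, enqueue the newly created foldable pair. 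The halting condition and correctness are identical to Theorem \ref{thm:main}: the loop terminates exactly when $\Delta$ has become folded.

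For the running-time analysis, the folding loop performs at most $V-1$ union operations---each elementary folding merges two vertices---and $O(V+E)$ find operations and bucket-merge steps, each amortized $O(\log^{*} V)$ when the union-find uses path compression and union-by-rank coupled with the \emph{smaller into larger} merge rule. This yields $O((V+E)\log^{*} V)$ for the loop, and adding the $O(E)$ setup gives the stated bound. The main obstacle, as in Theorem \ref{thm:main}, is the careful amortized accounting of bucket merges across successive foldings; but because the folding loop here is literally the same loop, that accounting transfers without modification, and the argument reduces to verifying that the initialization phase correctly prepares the loop's invariants for an arbitrary $\Delta$ rather than a bouquet $\Gamma_0$.
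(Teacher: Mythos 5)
Your overall strategy is the same as the paper's: the proof of Theorem \ref{thm:generalization} there is simply ``do a search through the graph, check each vertex for foldedness, put the unfolded ones into UNFOLDED (this is the $O(E)$ term), then run the folding loop of Theorem \ref{thm:main} unchanged,'' and your initialization-then-reuse plan matches that. Two points of divergence are worth flagging. First, a cosmetic one: the paper's worklist holds \emph{unfolded vertices}, not foldable edge pairs; foldable pairs are rediscovered inside the loop by scanning a vertex's edge list, which by Observation \ref{obs:folded} costs a bounded number of operations because the alphabet is fixed (at most $5$ edges need be examined over $F(a,b)$).

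Second, and more substantively: your ``merge the smaller adjacency bucket-table into the larger'' rule is not what the paper's loop does, and as stated it does not deliver the claimed bound. Moving bucket contents smaller-into-larger costs each edge $O(\log V)$ relocations in the worst case, i.e.\ $O(E\log V)$ total, which is not $O((V+E)\log^{*}V)$; the $\log^{*}$ in the theorem comes solely from Tarjan's amortized bound for the disjoint-set operations (Theorem \ref{thm:Tarjan}), not from any weighted-merge argument on adjacency data. The paper avoids the issue entirely: edge lists are combined by an $O(1)$ \ttt{concatenate}, edge endpoints are updated \emph{lazily} (only when an edge is actually examined, via root operations), and the fixed alphabet guarantees that each examination touches at most five edges. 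If you replace your bucket tables with that scheme --- or, as you also say, run the paper's loop genuinely verbatim --- the argument goes through; as written, the bucket-merge accounting is the one step that would fail to meet the stated running time.
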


We first present the data structures that will be used in our algorithm and state results pertaining to  running times of various operations. All this could then be coded using object oriented languages like Java or C++.

\section{Data Structures}
The terminology I will use is  non-standard in computer science, but hopefully more comprehensible to mathematicians. The details in this section are only given for completeness, all that is really important here are the theorems on running times.

For our purposes, a \emph{data type} is a tuple $(X,f_1,\ldots, f_m)$ where $X$ is a set and $f_1,\ldots,f_m$ are $n$-ary functions, i.e. functions with $n$ arguments such that for each $i$ and a fixed $Y_i$:\[
f_i: \underbrace{X \times \ldots \times X}_{n_i \tr{~times}}\lra Y_i\] Moreover we allow the functions to be undefined and allow ourselves to change their values. These functions will be called \emph{operations}. For example $X=\mathcal{P}(\N)$ is the collection of sets of natural numbers, with binary operations, \emph{union, intersection} and the unary operation \emph{least element} (which a set to to a natural number). We will also want to allow different \emph{instances} of a data type, e.g. the data type is math students with the function \emph{grade}$:\{\tr{students}\}\lra\mathbb{R}$ and we have two instances: calculus students and linear algebra students. Maybe some students will be taking both classes so they will have two grades, one for calculus and one for linear algebra it follows that there will be two instances of the grade function defined on different (though maybe not disjoint) sets of students. 

So far nothing can be said about running times. To this end we have to flesh out our construction, we give the actual algorithms that perform our operations. \emph{Primitive operations} are unary operations (or simply functions) that either correspond to variable assignment or so-called \emph{pointers} used in object oriented programming. We will directly invoke primitive operations in algorithms. We assume that the \emph{operating time cost} of either evaluating a primitive operation or changing its value on one entry will be 1.

Some operations will not be primitive, so to calculate them we will provide a \emph{method} which is basically an algorithm which, using primitive operations, enables one to perform a more complicated operation. Once a method is given, it will be possible to calculate the running time of the associated operation. The reason the word \emph{method} is used instead of simply algorithm is that we want to stress that it is at a lower level of abstraction, once it's given we want to forget everything about it other than its running time and the fact it works. As will be seen, there will also be a certain structure to the way the elements in our data type are interrelated which motivates the terminology \emph{data structure}.

The reason for this rather artificial formalism is mainly to ease analysis. The main algorithm that will be given in Section \ref{the algorithm} will be given in terms of operations whose semantic meaning is clear and it will be obvious that the algorithm actually works. The explicit methods presented in this section will give us running times for our operations and we'll be able to compute the running time of the algorithm.

As a remark to computer scientists, the definition of data type given here resembles an \emph{interface} and combined with the methods, what we're actually describing is an \emph{abstract data structure}.

\subsection{Ordered Sets}
This data structure actually is actually made of two interdependent components \emph{lists} and \emph{list nodes}. List nodes have two primitive operations:\begin{enumerate}
\item $\tr{next}: \{\tr{list nodes}\} \lra \{\tr{list nodes}\}$
\item $\tr{prev}: \{\tr{list nodes}\} \lra \{\tr{list nodes}\}$
\end{enumerate} And two operations that will require methods.\begin{enumerate}
\item $\tr{list}: \{\tr{list nodes}\} \lra \{\tr{lists}\}$
\item $\tr{remove}: \{\tr{list nodes}\} \lra \{\tr{lists}\}$
\end{enumerate}
For lists we have the two primitive operations:\begin{enumerate}
\item $\tr{head}:\{\tr{lists}\} \lra \{\tr{list nodes}\}$
\item $\tr{tail}:\{\tr{lists}\} \lra \{\tr{list nodes}\}$\end{enumerate}
As well as the binary operation:\begin{enumerate}
\item concatenate:$\{\tr{lists}\} \times \{\tr{lists}\} \lra \{\tr{lists}\}$ \end{enumerate}
Finally, we need an operation to add a node to a list:\begin{enumerate}
\item $\tr{addnode}:\{\tr{list nodes}\}\times\{\tr{lists}\}\lra \{\tr{lists}\}$
\end{enumerate}

So far we have two types of objects and some functions. An ordered set will be encoded as a \emph{doubly linked list}. It can be thought of as a chain of list nodes.
\begin{example} Here we have a list L, and list nodes a,b and c. We have the following function tables.
\[ 
\begin{array}{c|c|c|c}
\tr{list node n} & \tr{next(n)} &\tr{prev(n)} &\tr{list(n)}\\
\hline
a & b & \tr{undefined}& L\\
b & c &  a & \tr{undefined}\\
c & \tr{undefined} &b & L\\
\end{array}
\]
And \[\begin{array}{c|c|c}
\tr{List} X & \tr{head}(X) & \tr{tail}(X)\\ \hline
L & a & c \\ \end{array} \]
We represent this as as follows:
\insertfigure{happylist}{0.75}
\end{example}
A priori, there are no restrictions on what values functions can take, but if we're not careful our list will not be \emph{well formed}, for example:
\insertfigure{messylist}{0.50}

We can ensure that our structures will be well formed if we make sure that our methods keep structures well formed and only use these methods. We now give the methods associated to operations on ordered sets. When invoking a method we will use the \ttt{typewriter} font. The method associated to the function remove will be called \ttt{remove} and we will denote ``performing the \ttt{remove} method on a list node $n$'' by \Cmd{remove}{n}. This method does not return anything, it simply removes the list node $n$ from a list while keeping it well formed
\sk
\noi \Cmd{remove}{n}:
\begin{enumerate}
\item Get the variables $h$=head(list($n$)) and $t$=tail(list($n$)).
\item If $h=t=n$ then make head(list($n$)) and tail(list($n$)) undefined.
\item If $h=n \neq t$ then set head(list($n$))=next($n$), set list(next($n$))=list($n$), set prev(next($n$))=undefined, and set next($n$)=prev($n$)=undefined.
\item If $h \neq n=t$ then do the same as the previous with prev and next interchanged.
\item If $h \neq n \neq t$ then set next(prev($n$))=next($n$), set prev(next($n$))= prev($n$) and set next($n$)=prev($n$)=undefined.
\end{enumerate}

\noi The next method is for the concatenate operation for two lists $l_1, l_2$. We call the method \ttt{concatenate}, it appends the list nodes of $l_2$ to those of $l_1$ and leaves the list $l_2$ empty.

\sk \noi \ttt{concatenate}($l_1,l_2$):

\begin{enumerate}
\item If head($l_2$) is undefined ($l_2$ is empty) then do nothing.
\item If head($l_1$) is undefined, then set head($l_1$)=head($l_2$), set list(head($l_2$))=$l_1$, set tail($l_1$)=tail($l_2$), set list(tail($l_2$))=$l_1$ and set head($l_2$)=tail($l_2$)=undefined.
\item Else set next(tail($l_1$))=head($l_2$), set prev(head($l_2$))=tail($l_1$), set tail($l_1$)=tail($l_2$) and set list(tail($l_2$))=$l_1$.
\end{enumerate}

The following illustrates the \ttt{concatenate} operation:
\insertfigure{figure2}{0.75}

The method \ttt{addnode} for the addnode operation will not be given, but it is quite obvious. The following theorem holds.

\begin{theorem}There exists methods of the operations remove, concatenate and addnode that take a constant amount of time.
\end{theorem}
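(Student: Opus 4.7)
The plan is to observe that each of the three methods consists of a bounded number of primitive operations, independent of the sizes of the lists involved, and then invoke the stipulation that each primitive operation costs $1$ unit of time. The total running time is therefore the sum of finitely many constants, hence $O(1)$.

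Concretely, I would first inspect the method \ttt{remove}. It begins with two pointer lookups to obtain $h$ and $t$, and then branches into at most five cases determined by the outcomes of the tests $h=t=n$, $h=n\neq t$, $h\neq n=t$, and $h\neq n\neq t$. Each test is a single comparison of pointer values (a primitive operation), and in each branch the method performs a fixed number of reassignments of the primitive pointers \tr{next}, \tr{prev}, \tr{head}, \tr{tail}, \tr{list} (no loops, no recursion, no traversal of the list). A direct count shows each branch uses at most a fixed constant (less than ten) primitive operations, so the total cost of \Cmd{remove}{n} is bounded by this constant.

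The argument for \ttt{concatenate} is identical in spirit: the method starts with a test of whether \tr{head}($l_2$) is undefined, then branches into three cases; in each case it performs a fixed number of pointer reassignments at the interface between the tails of $l_1$ and heads of $l_2$. Since no case touches any nodes strictly between the heads and tails, the cost is bounded by a constant independent of $|l_1|$ and $|l_2|$. For \ttt{addnode}, whose method was not spelled out but is, as the author notes, obvious, the same reasoning applies: one either writes the new node at a location where \tr{head} or \tr{tail} is undefined (initialising both pointers), or one splices it at the tail (or head) of the list by updating a fixed number of \tr{next}, \tr{prev}, \tr{tail}, and \tr{list} pointers.

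The only conceivable obstacle is whether the operation \tr{list} really is a primitive; it is listed among the non-primitive operations at the start, yet the given methods freely assign and read it as if it were a pointer. I would therefore preface the argument with the observation that \tr{list} is in fact stored as a pointer attached to each list node, so that evaluating and updating it are primitive operations of cost $1$. With that clarification in place, every step of every method is a primitive operation, and bounding the number of steps per method by a constant (by straight case inspection) completes the proof.
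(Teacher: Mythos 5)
Your proposal is correct and follows essentially the same route as the paper, whose entire proof is the single observation that each of the three methods involves only a bounded number of primitive operations; you simply carry out the case-by-case count explicitly. Your side remark that \texttt{list} must really be stored as a pointer (despite being listed among the non-primitive operations) is a fair catch of a small inconsistency in the paper's setup, but it does not change the argument.
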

\begin{proof} The associated methods \ttt{remove}, \ttt{concatenate} and \ttt{addnode} involve only a bounded number of primitive operations.
\end{proof}

We can also enumerate a list $l_1$, indeed take head($l_1$) then repeatedly perform ``next'' operations, once the value ``undefined'' is reached, the list is exhausted.

\subsection{Disjoint Sets}\label{disjoint sets}
In our case we have a sequence of elementary foldings:\[
\Gamma_0 \lra \Gamma_1 \lra \ldots \lra \Gamma_M = \Gamma \] The composition, $\pi=\pi_M\circ\pi_{M-1}\ldots\circ\pi_1$ of all the quotient maps $\pi_i:\Gamma_i \lra \Gamma_{i+1}$ gives a quotient map $\pi:\Gamma_0 \lra \Gamma$. This map $\pi$, in turn, induces an equivalence relation on the vertices of of $\Gamma_0$, i.e $v \sim w \iff \pi(v)=\pi(w)$. In fact one can consider the vertices of $\Gamma$ as equivalence classes of vertices of $\Gamma_0$. These equivalence classes are ``built'' from smaller disjoint sets by successively merging them in each elementary folding. For example if the vertices $v, w$ in $\Gamma_i$ correspond to equivalence classes $\{v_1,\ldots, v_r\}, \{w_1,\ldots w_s\}$ respectively and if $\pi_i(v)=\pi_i(w)=\bar{u}$, then the vertex $\bar{u}$ of $\Gamma_{i+1}$ will correspond to the set of vertices $\{v_1,\ldots, v_r, w_1,\ldots, w_s\} \subset \tr{~Vertices}(\Gamma_0)$. Though this doesn't fully motivate our interest in the following data structure and it's clever methods it does give an example of how they are going to be used.

The Disjoint Set Forest data structure has an underlying set of nodes. On the set of nodes we have the following primitive operations:\begin{enumerate}
\item rank:$\{\tr{nodes}\}\lra \N$
\item parent:$\{\tr{nodes}\}\lra \{\tr{nodes}\}$.\end{enumerate} From this it is seen that nodes can be organized into rooted trees. We have the following non-primitive operations:\begin{enumerate}
\item root:$\{\tr{nodes}\}\lra\{\tr{nodes}\}$
\item merge: $\{\tr{nodes}\} \times \{\tr{nodes}\} \lra \{\tr{trees}\}$
\end{enumerate}

Some explanations are in order. We have a set $X$ of nodes and we want to build equivalence classes out of them. An equivalence class will be encoded as a rooted directed tree. We shall identify the trees by their root nodes, i.e. the unique node in the tree that has itself as a parent. If we want to know to which equivalence class a node $n$ belongs we use the function root($n$) which returns the root of $n$'s tree, similarly we can check if two nodes are ``congruent'' by checking if they have the same root. We will use the merge($u,v$) operation to form the union of the equivalence classes containing $u$ and $v$. It is clear that here too some care must be taken to  avoid ``malformed'' trees.

\begin{example}
Here is a set partitioned into two equivalence classes. Notice that the nodes pointing to themselves are roots or equivalence class representatives.
\insertfigure{SetTrees}{0.5}
\end{example}

\sk\noi \ttt{Initialization:} When a node $n$ is created we need the to set following initial values so that everything works: \begin{enumerate}
\item set parent($n$)=$n$
\item set rank($n$)=0.
\end{enumerate}This is like putting $n$ into an equivalence class with only itself in it.

To perform the root($n$) operation we use a method called \ttt{Find-set}$(n)$ which takes a node $n$ and returns the node $r$ which is the root of its tree. It is given recursively:
\sk\noi\ttt{Find-set}$(n)$
\begin{enumerate}
\item If parent($n$)=$n$, return $n$.
\item Else set parent($n$)=\Cmd{Find-set}{\tr{parent}(n)} and return parent($n$).
\end{enumerate}

\begin{proposition}This method actually works.
\end{proposition}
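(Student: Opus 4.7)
The plan is to prove correctness by induction on the depth $d(n)$ of the node $n$ in its rooted tree, where depth means the length of the unique directed path from $n$ to the root $r$ obtained by iterating parent.

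For the base case $d(n)=0$, the node $n$ is by definition the root of its tree, so parent($n$)$=n$, and the first clause of \ttt{Find-set} returns $n$, which is indeed the root. For the inductive step, assume the method correctly returns the root for every node of depth at most $k-1$, and suppose $d(n)=k\geq 1$. Then parent($n$) is a node of depth $k-1$ lying in the same tree as $n$, so by the inductive hypothesis \Cmd{Find-set}{\tr{parent}(n)} terminates and returns the root $r$ of that tree. The method then assigns parent($n$)$=r$ and returns $r$. Since $r$ was already the root of $n$'s tree before the reassignment, the returned value is correct.

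It remains to observe that the reassignment parent($n$)$=r$ (the path-compression step) does not corrupt the data structure: before the call, $n$ and $r$ lay in the same rooted tree, and after the call $n$ points directly to $r$, which still points to itself. So the underlying equivalence classes (the connected components) are preserved, and $r$ remains a root. Hence every subsequent invocation of \ttt{Find-set} encounters a well-formed structure.

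The only mild subtlety is that the inductive parameter $d(n)$ is well-defined, which requires that following parent from $n$ actually reaches a fixed point in finitely many steps. This follows from the way trees are built: the initialization step sets parent($n$)$=n$, giving $d(n)=0$, and the only other place parent is modified is inside \ttt{Find-set} itself (and, as we will see, inside \ttt{merge}), where a node's parent is replaced by a node of strictly smaller depth (respectively, a root). So the depth function is always finite and the recursion always terminates, completing the proof.
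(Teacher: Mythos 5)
Your proof is correct and follows the same route as the paper: induction on the depth of $n$, with the base case being a root and the inductive step invoking the hypothesis on parent($n$). The extra remarks on path compression preserving the equivalence classes and on the well-definedness of depth are sound additions but do not change the argument, which the paper states more tersely.
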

\begin{proof} We basically do this by induction on the \emph{depth} of $n$ i.e. the least integer $M$ such that:\[
\underbrace{\tr{parent}\circ\ldots\circ \tr{parent}}_{M-1 \tr{~times}}(n) = \underbrace{\tr{parent}\circ\ldots\circ \tr{parent}}_{M \tr{~times}}(n)\] If the depth is 0, i.e. $n$ is a root, then it works. If it works for all nodes of depth $M$ or less and $n$ has depth $M+1$ then \Cmd{Find-set}{\tr{parent}(n)} will return the root of $n$'s tree and all is well.\end{proof}

Clearly this is not the most expedient way to get the root node (which in this case would simply consist of  successively evaluating parents until we hit a ``fixed point''). However something interesting happens, instead of working your way up to the tree root $r$, you work your way up to the root and then back down again and at each step on the way back you set the values of parent functions to $r$. This is called \emph{path compression} and it makes the tree ``bushier'' and will make successive root operations faster. Here is a situation that could arise after performing root($a$):
\insertfigure{pathcompression}{0.75}

Though tree itself changes, the mathematical object it represents is the same: we still have the same nodes and the same equivalence classes. The tree, however, has been partially optimized.

The last operation, merge, should takes two nodes $x,y$ and make the union of of the equivalence classes containing $x$ and $y$ respectively. Here we use the rank, which is basically an upper bound on the depth of the tree. It is used to determine which node will be the new parent. We call the associated method \Cmd{Merge}{x,y}:
\begin{enumerate}
\item Get $r_1=$\Cmd{Find-set}{x}, $r_2=$\Cmd{Find-set}{y}.
\item If rank($r_1$)$ >$rank($r_2$) then set parent($r_2$)=parent($r_1$).
\item If rank($r_2$)$ >$rank($r_1$) then set parent($r_1$)=parent($r_2$).
\item Else set parent($r_2$)=$r_1$ and set rank($r_1$)=rank($r_1$)+1
\end{enumerate}

We now come to a truly amazing result due to Tarjan whose proof can be found in \cite{Cormen}. This proof uses the methods we just described. This result, however, is not obvious to prove. An \emph{amortized} running time is the combined running time of a sequence of operations.

\begin{thm}\label{thm:Tarjan} Suppose we perform $n$ Disjoint Set operations, i.e. root and merge operations, on a Disjoint Set forest containing $N$ nodes. Then there exist methods for the root and merge operations such that the \emph{amortized} running time devoted to these operations will be at most O$((n+N)\cdot\log^*(N))$.\footnote{The result in \cite{Cormen} actually gives an even better bound: instead of $\log^*$ it's an inverse Ackerman function.}
\end{thm}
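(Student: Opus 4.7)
The plan is to carry out an amortized analysis via the classical \emph{rank group} charging scheme. Since every \Cmd{Merge}{x,y} consists of a constant number of primitive operations together with two invocations of \ttt{Find-set}, it suffices to bound the total cost of all such invocations; each call runs in time proportional to the length of the parent chain from its input node up to the corresponding root, so the combinatorial task is to bound the sum of these path lengths over the entire sequence of operations.

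First I would record the structural properties that union-by-rank guarantees, essentially by inspection of \ttt{Merge} and \ttt{Find-set}: ranks are assigned only to roots and are frozen once a node ceases to be a root, path compression leaves ranks unchanged, along any parent chain ranks strictly increase, and the subtree rooted at a node of rank $r$ contains at least $2^r$ nodes. These observations imply that ranks never exceed $\log_2(N)$ and that at most $N/2^r$ nodes ever attain rank exactly $r$.

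Next, partition the ranks into \emph{rank groups} $G_g := \{r : \log^*(r) = g\}$; there are $O(\log^*(N))$ such groups, each a range of ranks whose aggregate node count is bounded by $\sum_{r \in G_g} N/2^r$. Now charge each unit of cost incurred when \ttt{Find-set} moves from a node $x$ to its current parent $p$ in one of two ways: to the \emph{operation} if $x$ and $p$ lie in different rank groups, and to the \emph{node} $x$ itself if they lie in the same group. Because ranks strictly increase along any parent chain, each individual call to \ttt{Find-set} incurs at most $O(\log^*(N))$ ``operation'' charges, contributing at most $O(n \log^*(N))$ over the $n$ operations.

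The crux --- and the main obstacle --- is bounding the total ``node'' charges. The key observation is that whenever a non-root node $x$ is charged, path compression replaces its parent by a node of strictly larger rank, so $x$ can be charged within a fixed rank group $G_g$ at most $|G_g|$ times before the rank of its parent escapes $G_g$, after which $x$ contributes no further charges inside $G_g$. Multiplying the aggregate node count of $G_g$ by $|G_g|$ and summing over the $O(\log^*(N))$ groups yields a total of $O(N \log^*(N))$ node charges, once one verifies that each per-group product is $O(N)$ thanks to the fast growth of the $\log^*$-scale boundaries. Adding the two contributions gives the claimed $O((n+N)\log^*(N))$ bound; the detailed verification of the per-group arithmetic is standard and is carried out in \cite{Cormen}.
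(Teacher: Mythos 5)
The paper does not actually prove this theorem: it attributes the result to Tarjan and defers entirely to \cite{Cormen}, so any honest argument you give is necessarily a different route. Your sketch is the standard Hopcroft--Ullman rank-group analysis, and its skeleton is sound: the union-by-rank invariants you list are correct, the partition of ranks into groups $G_g=\{r:\log^*(r)=g\}$ gives $O(\log^*(N))$ groups because ranks are bounded by $\log_2(N)$, the per-operation charge is $O(\log^*(N))$ since ranks strictly increase along find paths, and the per-group product $|G_g|\cdot\sum_{r\in G_g}N/2^r=O(N)$ is exactly the telescoping that makes the bound close. What your version buys is a self-contained justification of the one black box in the paper's running-time analysis; what the paper's citation buys is brevity and the sharper inverse-Ackermann bound mentioned in its footnote (which \cite{Cormen} obtains by a potential-function argument rather than rank groups). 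One detail you should patch before the charging scheme is airtight: your key claim that whenever a non-root node $x$ is charged, path compression replaces its parent by a node of strictly larger rank, fails for the child of the root on a find path, since that node's parent is not changed by compression and it could otherwise absorb unboundedly many node charges. The standard remedy is to charge that one node (and the root itself) to the operation as well, which adds only $O(1)$ per \ttt{Find-set} and leaves the $O((n+N)\cdot\log^*(N))$ total intact.
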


\subsection{Directed Labeled Graphs}
We now encode a graph. We assume that we are working over $F=F(a,b)$ the free group on the alphabet $\{a,b\}$. A graph will have two underlying sets consisting of \emph{vertex} objects and \emph{edge} objects. The idea is that there are functions assigning to edges their terminal and initial vertices and each vertex has list of adjacent edges. It follows that each edge will be a node in two lists. We will also want to organize vertices into Disjoint Set forests and put them in a list called UNFOLDED. We have the following primitive operations:
\begin{enumerate} 
\item edgelist:$\{\tr{vertices}\}\lra\{\tr{lists}\}$
\item initial:$\{\tr{edges}\}\lra\{\tr{vertices}\}$
\item terminal:$\{\tr{edges}\}\lra\{\tr{vertices}\}$
\item label:$\{\tr{edges}\}\lra \{a,b\}$
\end{enumerate} We also want to make lists of edges so we define two instances of the list node operations on the set of edges. One instance for the list at an edge's initial vertex and one instance for the list at an edge's terminal vertex. Hopefully the nomenclature will be self-explanatory:
\begin{enumerate}
\item next-initial:$\{\tr{edges}\}\lra\{\tr{edges}\}$
\item next-terminal:$\{\tr{edges}\}\lra\{\tr{edges}\}$
\item prev-initial:$\{\tr{edges}\}\lra\{\tr{edges}\}$
\item prev-terminal:$\{\tr{edges}\}\lra\{\tr{edges}\}$
\item remove-initial:$\{\tr{edges}\}\lra\{\tr{lists}\}$
\item remove-terminal:$\{\tr{edges}\}\lra\{\tr{lists}\}$
\item addnode-initial:$\{\tr{edges}\}\times\{\tr{lists}\}\lra\{\tr{lists}\}$
\item addnode-terminal:$\{\tr{edges}\}\times\{\tr{lists}\}\lra\{\tr{lists}\}$
\end{enumerate} And for vertices we have the following additional operations:\begin{enumerate}
\item next-UNFOLDED:$\{\tr{vertices}\}\lra\{\tr{vertices}\}$
\item prev-UNFOLDED:$\{\tr{vertices}\}\lra\{\tr{vertices}\}$
\item remove-UNFOLDED:$\{\tr{vertices}\}\lra\{\tr{lists}\}$
\item addnode-UNFOLDED:$\{\tr{vertices}\}\times\{\tr{lists}\}\lra\{\tr{lists}\}$
\item root:$\{\tr{vertices}\}\lra\{\tr{vertices}\}$
\item rank:$\{\tr{vertices}\}\lra\N$
\item merge:$\{\tr{vertices}\} \times \{\tr{vertices}\} \lra \{trees\}$
\end{enumerate}

\section{Ideas and the Algorithm}
\subsection{Elementary Foldings}
Recall that in the sequence of elementary foldings 
\[ \Gamma_0 \lra \Gamma_1 \lra \ldots \lra \Gamma_M = \Gamma \]
The vertices of $\Gamma_i$ could be seen as equivalence classes of vertices of $\Gamma_0$. For this reason we will denote vertices of $\Gamma_i$ as $[v]$, i.e. ``the equivalence class in the set of vertices of $\Gamma_0$ with representative $v$.''

\begin{definition} A vertex $[v]$ is said to be \emph{folded} if there are no edges with same label and incidence an $[v]$. Otherwise we say $[v]$ is \emph{unfolded}.\end{definition}

Consider the following identification of the edges $e_1$ and $e_2$ via an elementary folding.

\insertfigure{figure3}{0.5}
We see that that the vertices $[u]$ and $[w]$ get identified so that in the next graph in our sequence the equivalence class represented by $u$ will consist of the union $[u] \cup [w]$ we shall denote this by $[u]'$. In our computer program such an elementary folding would be accomplished by performing the operation merge($u,v$) (in the example rank($u$) $\geq$ rank($w$)), removing the edge $e_2$ from the edge lists at $w$ and $v$ (essentially deleting it) and finally performing concatenate(edgelist($u$),edgelist($w$)). Recall that after an elementary folding the edges at $[u]'$ will be the edges at $[u]$ plus the edges at $[w]$ minus the deleted edge. This is reflected by concatenating the edgelists and though none of the edges in $[w]$'s old edgelist are set to point to $u$ yet (edges go between vertices, not equivalence classes) it is possible to update them. However if we completely update all the edges at each folding we'll end up having something that runs in \emph{quadratic} time! Some care is therefore needed. The updating of edges only occurs when checking whether a vertex is folded (see Observation \ref{obs:folded} in Section \ref{sec:detect}) and in the second step of the loop in the algorithm in Section \ref{the algorithm} and when either case happens, we only update at most five edges at a time. This is the trick to get the algorithm to run in almost linear time.

Consider the following illustration. The figure on the top is the graph $\Gamma_i$ as a topological object with vertices corresponding to equivalence classes of vertices of $\Gamma_0$. We see that the edges outgoing from $[u]$ labeled $a$ will be identified in some elementary folding. The figure on the bottom is at a lower level of abstraction, it shows what is encoded in the computer. The circles represent ``vertex'' objects, notice that the vertices parent pointers as well as graph edges coming out of (going into) them:

\insertfigure{figure8}{0.75}

We see that the equivalence class $[u]$ contains eight elements, that the $v$'s edgelist has four entries but that there is only one edge ``actually'' at $v$, i.e. some edge $e$ with label($e$)=$b$ and initial($e$)=$v$. 

\subsection{Detecting Unfolded Vertices}\label{sec:detect}
The only other difficulty is figuring out \emph{where} to fold. Three observations tell us that we can easily keep track of the unfolded vertices and when we know that there are none left, then we're done.

\begin{observation}\label{obs:folded}
To check whether or not a vertex $[v]$ is folded takes a bounded number of operations. Indeed, we need only go through the edge list of $[v]$ and check the labels and incidences of the edges. 

To find the incidence of  an edge $e$ in $[v]$'s edge list, find $u=$initial($e$) and $w=$terminal($e$) and perform the operations root($u$) and root($w$) to find equivalence class representatives. If for example root($u$)=$v$ then $e$ is outgoing at $[v]$. Similarly we can determine if $e$ is incoming or forms a simple loop at $[v]$. At this point we could also update the edges i.e. set initial($e$)=root(initial($e$)) and set terminal($e$)=root(terminal($e$)) for an extra two operations.

Now go through the edge list of $v$. Either you find two edges with same label and incidence so $[v]$ is unfolded or you exhaust the edgelist without finding edges with the same incidence and label so $[v]$ is folded. Since we are assuming that we are working over $F(a,b)$ it is clear that an edgelist with five or more entries must result in unfoldedness. It follows that \emph{we never check more than 5 edges at a time}.
\end{observation}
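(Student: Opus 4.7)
My plan is to isolate a pigeonhole bound that caps at five the number of edges inspected by a single check, and then exhibit a method realizing this bound using only a constant number of primitive operations and root calls.

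The combinatorial core is the following. At a vertex $[v]$, any edge falls into one of four (label, incidence) classes: outgoing labeled $a$, outgoing labeled $b$, incoming labeled $a$, incoming labeled $b$ (a loop at $[v]$ simultaneously fills both the outgoing and the incoming slot for its label). By the definition of folded, $[v]$ is folded precisely when the edges in edgelist$([v])$ use each class at most once. Since only four classes are available, the pigeonhole principle gives that any edgelist with five or more entries must contain two edges sharing a class, hence witnesses that $[v]$ is unfolded.

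The method now writes itself. Walk down edgelist$([v])$ one list node at a time via the primitive \tr{next} operations. For each edge $e$ encountered, compute $u = $ root(initial$(e)$) and $w = $ root(terminal$(e)$); comparing $u$ and $w$ with $v$ reveals whether $e$ is outgoing, incoming, or a loop at $[v]$, and combining this with label$(e)$ determines the class of $e$. At this moment we may also assign initial$(e) \leftarrow u$ and terminal$(e) \leftarrow w$ (two primitive writes) to compress stale pointers, as the observation mentions. We keep a fixed table with four class slots; each time we process an edge we either find its slot empty and fill it, or find it occupied, in which case $[v]$ is declared unfolded and we halt. By the pigeonhole bound above we halt after at most five iterations, and in the remaining case the edgelist is exhausted and $[v]$ is folded.

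Totaling the work, each inspected edge incurs $O(1)$ primitive operations together with two root calls, and there are at most five inspected edges. The number of operations performed by a single foldedness check is therefore bounded by a constant, with the root calls to be amortized separately via Theorem~\ref{thm:Tarjan}. The genuinely load-bearing step is the pigeonhole bound: without it the edgelist of $[v]$ could be arbitrarily long (stale entries accumulate through concatenation and are never eagerly deleted), a naive traversal would cost linearly in its length, and the near-linear running time target of Theorem~\ref{thm:main} would be out of reach.
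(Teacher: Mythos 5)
Your proposal is correct and follows essentially the same route as the paper: traverse the edge list, determine each edge's incidence at $[v]$ by applying root to its endpoints (optionally rewriting the endpoint pointers), and stop after at most five edges because only four (label, incidence) classes exist over $F(a,b)$, with the root calls accounted for separately via Theorem~\ref{thm:Tarjan}. The only difference is that you make explicit the pigeonhole count (and the loop case) that the paper dismisses with ``it is clear,'' which is a clarification rather than a different argument.
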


\begin{observation}
An elementary folding is an essentially local operation. That is, whenever two edges get identified we need only to check for three vertices whether they have gone from being folded to unfolded or vice-versa. Any vertex that is not the initial or terminal vertex of some edge being identified  with another edge at that elementary folding will have the same number of incoming and outgoing edges after the elementary folding.
\end{observation}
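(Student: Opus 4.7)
The plan is to treat an elementary folding as a local modification of the graph and show that the foldedness test is only affected near the identified edges. I fix an elementary folding $\pi_i:\Gamma_i\lra\Gamma_{i+1}$ that identifies two edges $e_1,e_2$ sharing a vertex $[v]$ with the same label and incidence; without loss of generality both are outgoing at $[v]$, with $e_1$ going to $[w]$ and $e_2$ going to $[w']$. After the folding, $e_1$ and $e_2$ collapse to a single edge $e$ and $[w],[w']$ merge into a single vertex $[w]'$. Any edge other than $e_1,e_2$ retains its endpoints (viewed as equivalence classes of vertices of $\Gamma_0$) and its label in $\Gamma_{i+1}$.

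First I would show that for any vertex $[u]$ of $\Gamma_i$ with $[u]\notin\{[v],[w],[w']\}$, the multiset of (label, incidence) pairs appearing in the edge list at $[u]$ is identical in $\Gamma_i$ and in $\Gamma_{i+1}$. Indeed no edge incident to $[u]$ is modified by $\pi_i$, so both the number of incoming and outgoing edges at each label and the foldedness test of Observation \ref{obs:folded} read exactly the same data before and after the folding. This gives the second assertion of the observation directly, and as a consequence the foldedness status at $[u]$ cannot change.

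Next I would examine the three distinguished vertices. At $[v]$ exactly one of $e_1,e_2$ is removed from the edge list, so the list shrinks by one entry. At the merged vertex $[w]'$ the edge list is the concatenation of the lists at $[w]$ and $[w']$ with the removed entry excised. Because every other vertex is untouched, reassessing foldedness after $\pi_i$ reduces to two tests in $\Gamma_{i+1}$, one at $[v]$ and one at $[w]'$, corresponding to the three old vertices $[v],[w],[w']$.

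I do not expect a serious obstacle; the argument is essentially bookkeeping based on the locality of the identification. The only care required is for the degenerate configurations—$e_1$ or $e_2$ being a loop, $[w]=[w']$ already in $\Gamma_i$, or the common incidence at $[v]$ being "incoming" rather than "outgoing". In each such case the set of possibly-affected vertices is a subset of $\{[v],[w],[w']\}$, the concatenation and deletion steps are the same up to reversing direction, and the same argument applies.
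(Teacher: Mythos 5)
Your argument is correct and matches the paper's own (implicit, one-sentence) justification: the paper simply notes that any vertex not incident to the identified edges keeps the same incoming and outgoing edges, which is exactly the locality argument you spell out via the invariance of the (label, incidence) multiset at every vertex outside $\{[v],[w],[w']\}$. Your additional care with degenerate configurations (loops, $[w]=[w']$) goes slightly beyond what the paper records but does not change the approach.
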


\begin{observation} At the beginning there is exactly one unfolded vertex, i.e. where we initially attach our loops, and the algorithm terminates when there are no unfolded vertices left.
\end{observation}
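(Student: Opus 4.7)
The plan is to check the two halves of the observation separately. The second half, that the algorithm terminates exactly when no unfolded vertex remains, is immediate from the definitions: a graph is folded precisely when every one of its vertices is folded in the sense of the definition preceding this observation, and an elementary folding is available precisely when some vertex is unfolded. Hence the stopping condition ``no further elementary folding is possible'' is identical to ``no unfolded vertex remains.''

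For the first half, the plan is to dissect $\Gamma_0$, which is a bouquet of $m$ loops based at $v_0$: each input $J_i = x_{i,1} x_{i,2} \cdots x_{i,k_i}$ (with $x_{i,j} \in X^{\pm 1}$) contributes an edge-path having $k_i - 1$ new internal vertices, and both endpoints of the path are identified with $v_0$. Thus $v_0$ is the only vertex shared between, or within, distinct loops, and every other vertex $v$ is internal to a single $J_i$ and incident to exactly two edges, one coming from the letter $x_{i,j}$ and one from $x_{i,j+1}$. A direct case analysis on the signs of $x_{i,j}$ and $x_{i,j+1}$, using the convention that a positive letter traversed from $v$'s predecessor to $v$ produces an edge labelled by that letter with $v$ as its terminal vertex, and a negative letter reverses this orientation, shows that the two edges at $v$ share both label and incidence at $v$ if and only if $x_{i,j+1} = x_{i,j}^{-1}$. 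Reducedness of the $J_i$ rules this out, so every internal vertex of $\Gamma_0$ is folded, leaving $v_0$ as the unique candidate for an unfolded vertex.

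Finally, since only two labels are available and only two possible incidences at $v_0$, a pigeonhole argument on the at-most-four (label, incidence) categories at $v_0$ pins down exactly when $v_0$ is itself unfolded, namely as soon as five or more edge-ends accumulate at $v_0$, which covers every non-degenerate input. In the degenerate cases where $v_0$ happens to be folded at the outset, the algorithm simply has no work to do. The main obstacle is therefore not mathematical but purely notational: writing the sign-orientation convention precisely enough that the case check for internal vertices reduces to the single line ``same label and same incidence at $v$ iff $x_{i,j+1} = x_{i,j}^{-1}$.'' Once that convention is pinned down, both halves of the observation are one-line consequences.
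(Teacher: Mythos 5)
Your proposal is correct in substance, and in fact supplies more than the paper does: the paper states this Observation without any proof, treating both halves as evident from the construction of $\Gamma_0$ and from the definition of the folding process. Your verification for internal vertices is exactly the right argument --- the two edge-ends at an internal vertex of the loop $l_i$ agree in both label and incidence precisely when $x_{i,j+1}=x_{i,j}^{-1}$, which reducedness forbids --- and your caveat about degenerate inputs is a genuine (if minor) correction to the literal statement: for an input such as $J_1=ab$ the base vertex $v_0$ is already folded, so ``exactly one unfolded vertex'' should really read ``at most one, namely $v_0$''; the algorithm is unaffected since the while loop simply exits immediately. One small overstatement to fix: your pigeonhole claim that $v_0$ is unfolded ``exactly when'' five or more edge-ends accumulate there is only a sufficient condition, not a characterization --- two loops whose labels begin with the same letter already make $v_0$ unfolded with only four edge-ends at it. The five-edge bound is used in the paper (Observation \ref{obs:folded}) only to cap the cost of \emph{testing} foldedness over $F(a,b)$, not to characterize unfoldedness. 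Since you do not actually need a characterization of when $v_0$ is unfolded (only that every other vertex is folded), you can simply delete that sentence.
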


These three observations tell that we can have a list called {UNFOLDED which contains exactly the unfolded vertices and that at each elementary folding we need perform a bounded number of primitive, ordered set and disjoint set operations  to keep it updated.

\subsection{The Algorithm}\label{the algorithm}
 We will make a distinction between ordered set operations and disjoint set operations. We will call primitive operations and ordered set operations simply ``operations'' and mention disjoint set operations explicitly.

\noi {\bf Initialization:} 

We are given an input $(J_1,\ldots, J_n)$ of reduced words in $F(a,b)$. For each $J_i$ we make a directed labeled loop $l_i$ with label $J_i$ starting at $v_0$ and initialize each vertex as in Section \ref{disjoint sets} we call the resulting graph $\Gamma_0$. At this point there is only one unfolded vertex: $v_0$. We also create the list UNFOLDED containing the single vertex $v_0$.

\insertfigure{figure6}{0.25}

All this takes time $O(N)$.
\sk
\noi {\bf Folding:}
\sk
\noi While UNFOLDED is not empty do the following:\begin{enumerate}
\item Get $v$=head(UNFOLDED) to get an unfolded vertex. This costs 1 operation.

\item Get $L$=edgelist($v$). Get $e_1$=head($L$) get $u_1$=root(initial($e_1$)), $v_1$=root(terminal($e_1$)) and label($e_1$) to get the label and incidence of $e_1$ at $[v]$. Then set initial($e_1$))=$u_1$ and set terminal($e_1$)=$v_1$ to ``update'' the edge. Take $e_2$= either next-initial($e_1$) or next-terminal($e_1$) (depending on the incidence of $e_1$) and again get the incidence, get the label and update the edge. Keep performing ``next'' operations  until you get two edges with the same label and incidence and can fold. This costs 1+1 operations + $\leq 5\cdot(6$ operations + 2 disjoint set operation + some constant amount of time)

\end{enumerate}
\sk \noi
At this point we have found 2 edges $e_{i_1},e_{i_2}$ (without loss of generality $e_1, e_2$) with same incidence and label. We have four possible local situations:

\insertfigure{figure7}{0.5}

From Step 2 we know the the endpoints of $e_2$ and $e_1$ and can therefore establish which case we are dealing with (this takes constant time).
\sk
\noi {\bf Case I:}
\begin{itemize}
\item[I.1] merge($u,w$) (assume the new representative is $u$.) This costs 1 disjoint set operation.
\item[I.2] if necessary, remove the non representative vertex $w$ from UNFOLDED. This costs 1 operation.
\item[I.3] concatenate(edgelist($u$),edgelist($w$)). This costs 1 operation.
\item[I.4] We assume that $e_2$ is the edge going from $v$ to $w$. Then we do remove-initial($e_2$) and remove-terminal($e_2$). At this point we can assume that $e_2$ is deleted. This costs is 2 operations.
\item[I.5] Check whether the remaining vertices $[u]$ and $[v]$ are folded and add or remove them from UNFOLDED accordingly. By Observation \ref{obs:folded} this again takes a bounded number of disjoint set and ``normal'' operations.
\end{itemize}

How to handle cases II-IV is similar and will not be given. When we exit the ``while'' loop, i.e. UNFOLDED is empty, the algorithm terminates. All the remaining edges point to their representative vertices and no vertex is unfolded, so we have a usable folded graph.

\subsection{Analysis} Each time the ``while'' loop executes an edge gets deleted so the loop runs at most $N$ times i.e. the total length of the input. Each run through the loop in fact corresponds to an elementary folding. Each time the loop runs, a constant bounded number of ``standard'' and disjoint-set operations are executed so applying Theorem \ref{thm:Tarjan} this runs in time $O(N)+O(N\log^*(N))= O(N\log^*(N))$. This proves the main result, Theorem \ref{thm:main}. We can also give the following:
\begin{proof}[Proof of Theorem \ref{thm:generalization}]
We do a search through our graph and check at each vertex $v$ if it is folded. If not then we add $v$ to UNFOLDED. The search takes time $O(E)$. We then proceed as usual.
\end{proof}

\end{document}